\newtheorem{theorem}{Theorem}
\newtheorem{conjecture}[theorem]{Conjecture}
\newtheorem{lemma}[theorem]{Lemma}
\newtheorem{proposition}[theorem]{Proposition}
\newtheorem{corollary}[theorem]{Corollary}
\theoremstyle{definition}
\theoremstyle{remark}
\theoremstyle{remark}
\theoremstyle{observation}
\newtheorem{observation}[theorem]{Observation}
\numberwithin{theorem}{section}
\providecommand{\Z}{}
\providecommand{\N}{}
\renewcommand{\Z}{\mathbb{Z}}
\renewcommand{\N}{{\mathbb N}}
\newcommand{\E}{\textsf{\upshape E}}
\newcommand{\prob}{\textsf{\upshape Pr}}
\newcommand{\Var}{\textsf{\upshape Var}}
\newcommand{\wt}[1]{\widetilde{#1}}
\begin{document}

\title{Burning Random Trees}

\author{
Luc Devroye\thanks{School of Computer Science, McGill University, Montreal, Canada} 
\and
Austin Eide\thanks{Department of Mathematics, Toronto Metropolitan University, Toronto, Canada}
\and
Pawe\l{} Pra\l{}at\thanks{Department of Mathematics, Toronto Metropolitan University, Toronto, Canada}
}

\maketitle

\begin{abstract}
Let $\mathcal{T}$ be a Galton-Watson tree with a given offspring distribution $\xi$, where $\xi$ is a $\Z_{\geq 0}$-valued random variable with $\E[\xi] = 1$ and $0 < \sigma^{2}:=\Var[\xi] < \infty$. For $n \geq 1$, let $T_{n}$ be the tree $\mathcal{T}$ conditioned to have $n$ vertices. In this paper we investigate $b(T_n)$, the burning number of $T_n$. Our main result shows that asymptotically almost surely $b(T_n)$ is of the order of $n^{1/3}$.
\end{abstract}

\section{Introduction}

Graph burning is a discrete-time process that models influence spreading in a network. Vertices are in one of two states: either \emph{burning} or \emph{unburned}. In each round, a burning vertex causes all of its neighbors to burn and a new \emph{fire source} is chosen: a previously unburned vertex whose state is changed to burning. The updates repeat until all vertices are burning. The \emph{burning number} of a graph $G$, denoted $b(G)$, is the minimum number of rounds required to burn all of the vertices of $G$.  

Graph burning first appeared in print in a paper of Alon~\cite{alon1992transmitting}, motivated by a question of Brandenburg and Scott at Intel, and was formulated as a transmission problem involving a set of processors. It was then independently studied by Bonato, Janssen, and Roshanbin~\cite{bonato2014burning,bonato2016burn} who, in addition to introducing the name \emph{graph burning}, gave bounds and characterized the burning number for various graph classes. The problem has since received wide attention (e.g.~\cite{bastide2021improved,bessy2018bounds,land2016upper,mitsche2017burning,mitsche2018burning,norin2024burning}), with particular focus the so-called Burning Number Conjecture that each connected graph on $n$ vertices requires at most $\lceil \sqrt{n} \rceil$ turns to burn. This conjecture is best possible, as $b(P_n) = \left\lceil \sqrt{n} \right\rceil$ for a path on $n$ vertices.

Clearly, $b(G) \le b(T)$ for every spanning tree $T$ of $G$. Hence, the Burning Number Conjecture can be stated as follows:
\begin{conjecture}
For any tree $T$ on $n$ vertices, $b(T) \le \left\lceil \sqrt{n} \right\rceil$.
\end{conjecture}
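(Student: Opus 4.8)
The statement above is the (still open) Burning Number Conjecture for trees; what follows is the line of attack I would take and the point at which I expect it to break down. The plan is to work with the standard ball-covering reformulation of the burning number~\cite{bonato2016burn}: for a graph $G$ one has $b(G)\le k$ if and only if there exist vertices $v_1,\dots,v_k$ (the fire sources, in ignition order) such that the balls $B_G(v_i,\,k-i)$, $i=1,\dots,k$, cover $V(G)$. Hence, setting $k:=\lceil\sqrt n\rceil$, the conjecture becomes: every $n$-vertex tree $T$ admits a covering of $V(T)$ by $k$ balls of radii $0,1,\dots,k-1$. Since $\sum_{r=0}^{k-1}(2r+1)=k^2\ge n$ there is, numerically, just barely enough room, the extremal instance being the path $P_n$, where a radius-$r$ ball is forced to be an interval of exactly $2r+1$ vertices while in a general tree a ball can be far larger. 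The guiding heuristic is therefore that the path is the worst case and that one should be able to ``save'' room on any tree that is not a path.

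First I would try the greedy peeling argument. Fix a longest path $P$ of $T$, place the largest ball $B(v,k-1)$ with centre $v$ chosen on $P$ to maximise the number of newly covered vertices, delete the covered set, and recurse on the remaining forest with radii $k-2,k-3,\dots$ in turn. The clean inductive invariant one wants is: after $t$ deletions the remaining forest has at most $(k-t)^2$ vertices and is ``path-like enough'' that a radius-$(k-1-t)$ ball can always be placed to cover at least $2(k-1-t)+1$ of them — which is exactly what makes the counting close. A softer variant is to verify the bound first for structured trees such as caterpillars and spiders, for which it is known~\cite{bonato2016burn}, then decompose an arbitrary tree rooted at its centroid into a bounded number of such pieces and split the available radii $0,\dots,k-1$ among them roughly in proportion to their sizes.

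The hard part — and the reason the conjecture is still open — is the fragmentation created when a large ball is deleted from a tree: a radius-$r$ ball around a high-degree vertex can break $T$ into many subtrees, each far too small to absorb a subsequent large ball efficiently, so the $(k-t)^2$ budget is squandered. Equivalently, the very feature that should help (balls in non-paths being large) also destroys the structure one is recursing on, and there is no obvious way to make a global accounting balance. This is why the best unconditional results are only of the form $b(T)\le\sqrt{cn}$ for some constant $c>1$~\cite{bastide2021improved}, obtained by essentially this peeling scheme with lossy bookkeeping. I would therefore expect a straightforward attempt along these lines to recover only such a weaker constant, or to succeed only on restricted subclasses (bounded number of leaves, bounded degree, caterpillar-like trees); closing the gap down to $\lceil\sqrt n\rceil$ appears to require a genuinely amortised argument that pays for each act of fragmentation using precisely the small balls that the fragmentation would otherwise render useless.
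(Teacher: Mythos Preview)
The statement is a \emph{conjecture}, not a theorem, and the paper does not attempt to prove it; it is quoted precisely as the open Burning Number Conjecture, followed only by a survey of the best known partial bounds (culminating in $b(T)\le(1+o(1))\sqrt n$ from~\cite{norin2022burning}). Your write-up correctly recognises this and, rather than claiming a proof, sketches the standard greedy ball-peeling heuristic and accurately pinpoints the fragmentation obstacle that prevents it from reaching the conjectured constant. That is the appropriate response here: there is no proof in the paper to compare against, and your discussion of why the naive approach stalls at $\sqrt{cn}$ for some $c>1$ matches the state of the art the paper summarises.
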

Although the conjecture feels obvious, it has resisted attempts at its resolution. It is easy to couple the process on any tree $T$ on $n$ vertices ($n-1$ edges) with the process on $C_{2(n-1)}$, a cycle on twice as many edges. (See Figure~\ref{fig:tree_and_cycle}.) This yields
$$
b(T) \le \left\lceil \sqrt{2(n-1)} \right\rceil = \sqrt{2n} + O(1) \qquad (\sqrt{2} \approx 1.41421).
$$

\begin{figure}
\centering
\begin{subfigure}{.5\linewidth}
  \centering
  \includegraphics[width=.7\linewidth]{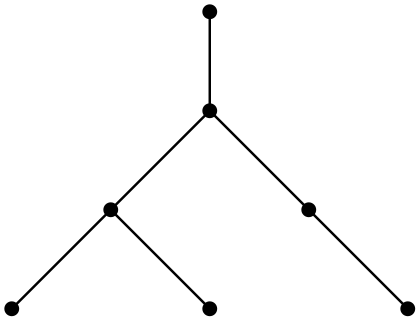}
  \caption{}
  %\label{fig:sub1}
\end{subfigure}%
\begin{subfigure}{.5\linewidth}
  \centering
  \includegraphics[width=.7\linewidth]{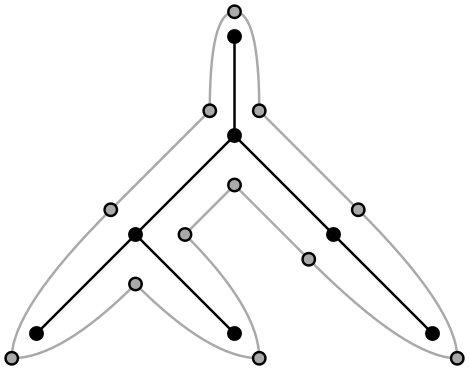}
  \caption{}
  %\label{fig:sub2}
\end{subfigure}
\caption{A tree $T$ (a) and a cycle corresponding to a depth-first search of $T$ (b). Any burning sequence for the cycle projects to a burning sequence for the tree.}
\label{fig:tree_and_cycle}
\end{figure}

\noindent
In~\cite{bessy2018bounds}, Bessy, et.\ al.\ proved that
$$
b(T) \le \sqrt{ \frac {12}{7} n} + 3 \qquad (\sqrt{12/7} \approx 1.30931).
$$
This bound was consecutively improved by Land and Lu in~\cite{land2016upper} to
$$
b(T) \le \left\lceil \frac { \sqrt{24n+33} - 3 }{4} \right\rceil = \sqrt{ \frac {3}{2} \, n } + O(1) \qquad (\sqrt{3/2} \approx 1.22475).
$$
Currently, the best upper bound, due to Bastide, et.\ al.\ in~\cite{bastide2021improved}, is
$$
b(T) \le \left\lceil \sqrt{ \frac {4}{3} n } \right\rceil + 1 \qquad (\sqrt{4/3} \approx 1.15470).
$$
Norin and Turcotte~\cite{norin2024burning} proved arguably the strongest result in this direction: 
$$
b(T) \le (1+o(1)) \sqrt{n}.
$$
That is, the conjecture holds asymptotically.

\bigskip

We intend to investigate the burning number of random trees. Let $\mathcal{T}$ be a Galton---Watson tree with a given offspring distribution $\xi$, where $\xi$ is a $\Z_{\geq 0}$-valued random variable with 
\begin{equation}\label{eq:distribution}
\E[\xi] = 1, \qquad \text{ and } \qquad 0 < \sigma^{2}:=\Var[\xi] < \infty. 
\end{equation}
In other words, the Galton---Watson tree is critical, of finite variance, and satisfies $\prob(\xi = 1) < 1$. In particular, it implies that $0 < \prob(\xi = 0) < 1$.

For $n \geq 1$, let $T_{n}$ be the tree $\mathcal{T}$ conditioned to have $n$ vertices. (Implicitly, we assume $\prob(|\mathcal{T}| = n) > 0$. This requires a straightforward assumption on $n$ and $\xi$ which we make explicit later; see Section \ref{sec:upper_bound}.) The resulting random tree is an instance of the family of \textit{simply generated trees} introduced by Meir and Moon~\cite{meir1978altitude}. This family contains many combinatorially interesting random trees such as uniformly chosen random plane trees, random unordered labelled trees (known as Cayley trees), and random $d$-ary trees. For more examples, see, Aldous~\cite{aldous1991continuum} and Devroye~\cite{devroye1998branching}. For more on the relationship between conditioned Galton---Watson trees and simply generated trees, see Janson~\cite[Section 4]{janson2012simply}.

\medskip

Our main result shows that, with high probability, $b(T_n)$ is of the order of $n^{1/3}$. 

\begin{theorem}\label{thm:main_theorem}
    Let $T_{n}$ be a conditioned Galton---Watson tree of order $n$, subject to~(\ref{eq:distribution}). For any $\epsilon = \epsilon(n)$ tending to $0$ as $n \to \infty$, we have
    $$
        \prob \Big( ( \epsilon n )^{1/3} \le b(T_{n}) \le (n/\epsilon)^{1/3} \Big) = 1 - O(\epsilon).
    $$
\end{theorem}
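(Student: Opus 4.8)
I would prove the two one-sided estimates $\prob(b(T_n)>t)=O(n/t^3)$ and $\prob(b(T_n)<s)=O(s^3/n)$, valid uniformly in $t$ and $s$, and then take $t=(n/\epsilon)^{1/3}$ and $s=(\epsilon n)^{1/3}$, so that each failure probability becomes $O(\epsilon)$. Both halves rest on the elementary dictionary between burning and ball covers. On the one hand, if $V(T_n)$ can be covered by $q$ balls of radius at most $\rho$ then $b(T_n)\le q+\rho$: light the $q$ centres in the first $q$ rounds, and by round $q+\rho$ every fire has been spreading for at least $\rho$ rounds, hence has engulfed its ball. On the other hand, if $b(T_n)=k$ with fire sources $x_1,\dots,x_k$, then $\bigcup_{i=1}^{k}B_{T_n}(x_i,k-i)=V(T_n)$, so $n\le\sum_{j=0}^{k-1}V_j$, where $V_j:=\max_{v\in T_n}|B_{T_n}(v,j)|$.

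\textbf{Upper bound.} Work with the contour (depth-first) function $C\colon\{0,1,\dots,2(n-1)\}\to\Z_{\ge 0}$ of $T_n$. The geometric point is that for any time interval $I$, the vertices visited by the contour during $I$ all lie in a single ball of radius $\mathrm{osc}_I(C)=\max_I C-\min_I C$: if $v^\star$ is a vertex visited at a time realising $\min_I C$, the contour cannot leave the subtree rooted at $v^\star$ during $I$ without dropping below $C(v^\star)$, so every vertex visited in $I$ is a descendant of $v^\star$ lying within $\mathrm{osc}_I(C)$ generations of it. Hence greedily chopping $\{0,\dots,2(n-1)\}$ into maximal consecutive blocks of oscillation $\le r$ produces a cover of $V(T_n)$ by $q_r(C)$ balls of radius $r$, so $b(T_n)\le q_r(C)+r$. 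It then remains to show $\E[q_r(C)]=O(n/r^2)$: $q_r(C)$ is at most one plus the number of disjoint $r$-oscillations of $C$, and because $C$ is a conditioned version of a random walk whose rescaling converges to a multiple of the Brownian excursion, a typical $r$-oscillation consumes $\Theta(r^2)$ steps, so there are $O(n/r^2)$ of them in expectation — this is where finiteness of $\sigma^2$ is used, and the excursion conditioning is absorbed by passing through the corresponding bridge (or a Vervaat re-rooting). Choosing $r=t/2$ and applying Markov's inequality gives $\prob(b(T_n)>t)\le\prob(q_{t/2}(C)>t/2)\le 2\E[q_{t/2}(C)]/t=O(n/t^3)$.

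\textbf{Lower bound.} By the dictionary it suffices to prove $\E[V_j]=O(j^2)$ uniformly in $j$ and $n$, since then $\prob(b(T_n)\le k)\le\prob(\sum_{j=0}^{k-1}V_j\ge n)\le n^{-1}\sum_{j=0}^{k-1}\E[V_j]=O(k^3/n)$. To bound $V_j$, one checks the containment: for every $v$, the ball $B_{T_n}(v,j)$ is contained in the set of descendants within $2j$ generations of the highest ancestor of $v$ lying at distance $\le j$ from $v$; consequently $V_j\le W_{2j}$, where $W_m$ is the maximum, over all vertices $w$ of $T_n$, of the number of descendants of $w$ within $m$ generations. So everything reduces to $\E[W_m]=O(m^2)$, i.e.\ to a uniform control of the local width of $T_n$: in a critical finite-variance Galton--Watson tree the first $m$ generations of any rooted subtree typically contain $\Theta(m^2)$ vertices, and one needs this in the strong form that even the maximum over all $\sim n$ subtrees has expectation $O(m^2)$. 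This would be proved via the usual comparison of a conditioned Galton--Watson tree with a Galton--Watson forest (and, locally, with the Kesten size-biased tree), together with tail estimates for the number of vertices in the first $m$ generations of a single Galton--Watson tree.

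\textbf{Where the difficulty lies.} The real work is the estimate $\E[W_m]=O(m^2)$. A plain union bound over the $\sim n$ subtrees, fed with the (roughly exponential) upper tail of the first-$m$-generations count of a single tree, yields only $\E[W_m]=O(m^2\log n)$, which would degrade the conclusion to $\prob(b(T_n)<(\epsilon n)^{1/3})=O(\epsilon\log n)$ and miss the clean bound of Theorem~\ref{thm:main_theorem}. Eliminating this logarithm is the crux: the subtrees of $T_n$ are nested and their first-$m$-generation portions overlap heavily along every ancestor chain, so the family of counts $|\{u:w\preceq u,\ \mathrm{depth}(u)-\mathrm{depth}(w)\le m\}|$ is very far from an independent sample, and one must exploit this overlap directly — or translate the whole estimate into the language of the contour function, where $W_m$ is governed by oscillations of $C$ over windows whose lengths are themselves controlled. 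Once this estimate and the companion oscillation bound for $C$ are in place, the two Markov steps together with the substitution $t=(n/\epsilon)^{1/3}$, $s=(\epsilon n)^{1/3}$ finish the proof.
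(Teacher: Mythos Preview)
Your upper-bound route via contour oscillations is different from the paper's but reasonable in outline. The paper instead counts the number $Y$ of vertices $v\in T_n$ with $h((T_n)_v)\ge k$, shows $\E[Y]=O(n/k+\sqrt n)$ by comparing a cyclic shift of the Lukasiewicz path with an unconditioned i.i.d.\ walk and invoking Kolmogorov's estimate $\prob(h(\mathcal T)\ge k)=O(1/k)$, and then pigeonholes over depths modulo $k$: some residue class $j$ contains at most $Y/k$ such vertices, and balls of radius $2k$ about those (plus the root) already cover $V(T_n)$. Markov on $Y$ then gives $\prob(\hat b(T_n)>2k)=O(n/k^3)$. One warning about your version: for a general finite-variance $\xi$ the contour $C$ is \emph{not} a conditioned simple random walk (only the Lukasiewicz path is a conditioned i.i.d.\ walk), so the step ``an $r$-oscillation consumes $\Theta(r^2)$ steps'' cannot be read off from gambler's ruin and needs a genuine argument for the non-uniform measure on Dyck paths.

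The lower bound is where the real gap lies, and you have correctly located it. Controlling the \emph{maximum} ball volume via $\E[W_m]=O(m^2)$ is both harder than necessary and, as stated, false for small $m$: already $W_1-1$ equals the maximum number of children in $T_n$, whose expectation is $\Theta(\log n/\log\log n)$ for Poisson offspring and can be polynomial in $n$ for heavier-tailed finite-variance $\xi$. The paper sidesteps the maximum entirely with a first-moment argument on \emph{pairs}. If $\hat b(T_n)\le k$ then $V(T_n)$ partitions into $k$ pieces $U_1,\dots,U_k$ each of diameter at most $2k$, so by Jensen the number $Q_{2k}(T_n)$ of unordered pairs at distance $\le 2k$ satisfies $Q_{2k}(T_n)\ge\sum_j\binom{|U_j|}{2}\ge n^2/(2k)-n/2$. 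On the other hand the known bound $\E[P_i(T_n)]=O(ni)$ gives $\E[Q_{2k}(T_n)]=O(nk^2)$, and Markov yields $\prob(\hat b(T_n)\le k)=O(k^3/n)$. Replacing your inequality $n\le\sum_{j<k}V_j$ by $Q_{2k}\ge\sum_j\binom{|U_j|}{2}$ is exactly what removes the logarithm you were worried about.
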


The paper is structured as follows. First, we make a simple observation that the burning number can be reduced to the problem of covering vertices of the graph with balls, a slightly easier problem; see Section~\ref{sec:covering}. Section~\ref{sec:lower_bound} is devoted to the lower bound for the burning number whereas the upper bound is provided in Section~\ref{sec:upper_bound}. We finish the paper with a few natural questions; see Section~\ref{sec:future}.

\section{Covering a Graph with Balls}\label{sec:covering}

In this section, we show a simple but convenient observation that reduces the burning number to the problem of covering the graph's vertices with balls. Let $G = (V, E)$ be any graph. For any $r \in \N_{0}$ and vertex $v \in V$, we denote by $B_r(v)$ the ball of radius $r$ centered at $v$, that is, $B_r(v) = \{ u \in V : d(u,v) \le r \}$, where $d(u,v)$ denotes the distance between $u$ and $v$.

First, note that since the burning process is deterministic, a fire source $v$ makes all vertices in $B_t(v)$ burn after $t$ rounds but only those vertices are affected. As a result, the burning number can be reformulated as follows: 
$$
b(G) = \min \left\{ k \in \N : \exists v_0, v_1, \ldots, v_{k-1} \in V \text{ such that } \bigcup_{r=0}^{k-1} B_r(v_r) = V \right\}.
$$
Dealing with balls of different radii is inconvenient so we will simplify the problem slightly by considering balls of the same radii. Let $\hat{b}(G)$ be the counterpart of $b(G)$ for this auxiliary covering problem, that is,
$$
\hat{b}(G) = \min \left\{ k \in \N : \exists v_1, v_2, \ldots, v_{k} \in V \text{ such that } \bigcup_{r=1}^{k} B_k(v_r) = V \right\}.
$$

Covering with $k$ balls of increasing radii (in particular, all of them of radii at most $k-1$) is not easier than covering with $k$ balls of radius $k$. Hence, $\hat{b}(G) \le b(G)$. On the other hand, covering with $2k$ balls of increasing radii (in particular, $k$ of them of radii at least $k$) is not more difficult than covering with $k$ balls of radius $k$ implying that $b(G) \le 2 \hat{b}(G)$. We conclude that $\hat{b}(G)$ and $b(G)$ are of the same order:

\begin{observation}\label{obs:b_hat}
For any graph $G=(V,E)$, 
$$
\hat{b}(G) \le b(G) \le 2 \hat{b}(G).
$$
\end{observation}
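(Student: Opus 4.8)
The plan is to prove the two inequalities separately, in each case converting an optimal sequence of one type into a valid sequence of the other by writing it down explicitly. The only content is the bookkeeping of radii already sketched in the paragraph preceding the statement, so I do not expect a genuine obstacle here beyond lining up the index ranges correctly.

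For the left inequality $\hat{b}(G) \le b(G)$, take an optimal burning sequence $v_0, v_1, \ldots, v_{k-1}$ with $k = b(G)$, so that $\bigcup_{r=0}^{k-1} B_r(v_r) = V$. Every radius used is at most $k - 1 < k$, so by monotonicity of balls (i.e. $B_r(v) \subseteq B_{r'}(v)$ whenever $r \le r'$) we get $B_r(v_r) \subseteq B_k(v_r)$ for each $r$, whence $\bigcup_{r=0}^{k-1} B_k(v_r) = V$. This is a cover of $V$ by $k$ balls of common radius $k$, so $\hat{b}(G) \le k = b(G)$.

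For the right inequality $b(G) \le 2\hat{b}(G)$, set $k = \hat{b}(G)$ and fix $v_1, \ldots, v_k$ with $\bigcup_{i=1}^{k} B_k(v_i) = V$. I would exhibit a burning sequence of length $2k$, which by definition uses radii $0, 1, \ldots, 2k-1$, as follows. The last $k$ radii, namely $k, k+1, \ldots, 2k-1$, are each at least $k$; taking the fire source of the round with radius $k + i - 1$ to be $v_i$, for $i = 1, \ldots, k$, gives $B_{k+i-1}(v_i) \supseteq B_k(v_i)$, so that $\bigcup_{i=1}^{k} B_{k+i-1}(v_i) \supseteq \bigcup_{i=1}^{k} B_k(v_i) = V$ already; the remaining $k$ rounds (radii $0, \ldots, k-1$) may use arbitrary vertices as fire sources. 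Hence $b(G) \le 2k = 2\hat{b}(G)$, and combining the two bounds gives the observation. For completeness one should also dispatch the trivial cases: if $G$ is connected, as for the trees studied here, a single fire source burns all of $V$ within a number of rounds equal to the diameter of $G$, so both quantities are finite; and otherwise both equal $+\infty$ and the inequalities hold vacuously.
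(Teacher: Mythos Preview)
Your argument is correct and is exactly the approach sketched in the paragraph preceding the observation in the paper: enlarge all radii to $k$ for the first inequality, and reserve the top $k$ radii $k,\ldots,2k-1$ for the $k$ centres of an optimal $\hat{b}$-cover for the second. One small slip in your closing remark: for a finite disconnected graph both $b(G)$ and $\hat{b}(G)$ are still finite (take $k=|V|$ and centre a ball at every vertex), so the ``otherwise both equal $+\infty$'' clause is inaccurate and in any case unnecessary for the observation.
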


\noindent
In particular, we may prove the bounds in our main result (Theorem~\ref{thm:main_theorem}) for $\hat{b}(G)$ instead of $b(G)$, which will be slightly easier. 

\section{Lower bound}\label{sec:lower_bound}

For an arbitrary tree $\tau$ and $i \in \N$, let 
$$
P_{i}(\tau) := \left| \Big\{ \{v,w\}: v,w \in V(\tau),\,d(v,w) = i \Big\} \right|.
$$
In other words, $P_{i}(\tau)$ counts the number of unordered pairs of vertices which are distance $i$ apart in $\tau.$ We have the following result from~\cite{devroye2011distances} that upper bounds the expected number of such pairs in the random tree $T_n$: 

\begin{theorem}[{\cite[Theorem 1.3]{devroye2011distances}}]\label{distance_theorem}
There exists a constant $c > 0$, dependent on the distribution of $\xi$, such that for all $n \in \N$ and $i \in \N$, $\E[P_{i}(T_{n})] \leq cni$.
\end{theorem}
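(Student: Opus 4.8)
The distance theorem is quoted from \cite{devroye2011distances}; were one to reprove it from scratch, the natural route is analytic combinatorics on the tree generating function, organized as follows. First pass from unordered to ordered pairs: $Q_i(\tau):=2P_i(\tau)$ counts the ordered pairs $(v,w)$ with $d(v,w)=i$, so it suffices to bound $\E[Q_i(T_n)]$. Root the tree at its usual root; each path from $v$ to $w$ has a unique highest vertex $a$ (its apex), and decomposes into a down-arm from $a$ to $v$ of length $i_1\ge 0$ and a down-arm from $a$ to $w$ of length $i_2\ge 0$ with $i_1+i_2=i$, the two arms leaving $a$ through distinct children whenever $i_1,i_2\ge 1$. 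Thus a tree carrying such a marked path is built from a ``context'' running from the root down to $a$ (a spine with arbitrary fringe subtrees), the branching at $a$, and the two arms, each a spine of fringe subtrees terminating in an arbitrary subtree.

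In the simply generated/weighted framework, with $y=y(x)$ the tree generating function solving $y=x\Phi(y)$ for the weight generating function $\Phi$, every step of the context or of an arm contributes a factor $x\Phi'(y)$, the endpoint of each arm a factor $y$, and the apex a factor $x\Phi''(y)$ in the interior case. Summing over the $i-1$ splittings with $i_1,i_2\ge 1$ and adding the two boundary terms ($i_1=0$ or $i_2=0$) gives, for $i\ge 2$,
$$
\sum_{n} t_{n}\,\E[Q_i(T_n)]\,x^{n} \;=\; \frac{(i-1)\,x\Phi''(y)\,y^{2}\,(x\Phi'(y))^{i-2}\;+\;2\,(x\Phi'(y))^{i}\,y}{1-x\Phi'(y)},
$$
where $t_n=[x^n]y$ is the total weight of trees of size $n$. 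Finite variance of $\xi$, together with $\prob(\xi=1)<1$ so that $\Phi$ is not affine, forces the standard square-root singularity at $x=\rho$ with $y(\rho)=\tau$, $\rho\Phi'(\tau)=1$, and $\Phi''(\tau)>0$ (equal to $\sigma^2$ in the mean-one normalization). Writing $g(x)=1-x\Phi'(y(x))$, one gets $g(x)\sim c\sqrt{1-x/\rho}$ with $c>0$, so near $\rho$ one has $(x\Phi'(y))^{i}=(1-g)^{i}\approx e^{-ig(x)}$. A transfer theorem for $e^{-a\sqrt{1-x/\rho}}/\sqrt{1-x/\rho}$ then shows that, while $t_n\sim c'\rho^{-n}n^{-3/2}$, the numerator has $n$th coefficient of order at most $i\,\rho^{-n}n^{-1/2}$ uniformly for $i=O(\sqrt n)$; dividing yields $\E[Q_i(T_n)]=O(ni)$ in that range, which already gives the right order.

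The main obstacle is the uniformity over all $i$ and $n$. Once $i\gtrsim\sqrt n$ the factor $e^{-ig}\approx e^{-ic\sqrt{1-x/\rho}}$ destroys the naive estimate, and the coefficient must be extracted by a saddle-point argument (the relevant scale is $1-x/\rho\asymp i^{-2}$); but since $cni$ is an extremely generous target for $i\gg\sqrt n$, it is enough to combine the crude bound $P_i(T_n)\le\binom{n}{2}$ with $P_i(T_n)=0$ for $i\ge n$ and a tail estimate showing $\sum_{i\ge\lambda\sqrt n}\E[P_i(T_n)]$ is at most $n^{3/2}$ times a Gaussian factor in $\lambda$. One must also check that every implied constant depends only on the law of $\xi$ (through $\rho,\tau,\Phi''(\tau)$) and dispose of $i\in\{1,2\}$ directly. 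As a consistency check — though not a substitute, since it is silent for $i=o(\sqrt n)$ — one can recover the bulk regime from Aldous' convergence of rescaled $T_n$ to the Brownian CRT plus a local limit theorem for the distance between two uniform vertices, using that the CRT two-point distance density vanishes linearly at the origin and has a Gaussian tail.
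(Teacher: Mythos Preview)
The paper does not prove this statement; it is quoted from \cite{devroye2011distances} and used as a black box in Proposition~\ref{burning_lower_bound}, so there is no proof in the paper to compare against.

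Your generating-function sketch is reasonable in outline but does not work at the stated level of generality. The only standing hypotheses on $\xi$ are $\E[\xi]=1$ and $0<\Var[\xi]<\infty$; no exponential moment is assumed. Under a bare second moment the probability generating function $\Phi(z)=\E[z^{\xi}]$ can have radius of convergence exactly~$1$ (take for instance $\P(\xi=k)\asymp k^{-4}$), and then there is no point at which $\Phi$ is analytic beyond $\tau=1$; both the square-root singular expansion of $y(x)$ and the Flajolet--Sedgewick transfer theorems you invoke require $\Phi$ to extend analytically past $\tau$, which is precisely an exponential-moment condition on $\xi$. The sentence ``finite variance of $\xi$ \dots\ forces the standard square-root singularity at $x=\rho$'' is therefore where the argument breaks. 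Even granting that hypothesis, the uniformity over all $i$ and $n$ that you flag as ``the main obstacle'' is only gestured at; the saddle-point step for $i\gtrsim\sqrt{n}$ would need to be carried out, not merely named.

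The proof in \cite{devroye2011distances} is probabilistic, built on the random-walk/lattice-path encoding of $T_n$ and local limit estimates --- the same toolkit the present paper deploys for Lemma~\ref{random_vertex} --- and these are robust under finite variance alone. If you want to supply a self-contained argument here, that is the route to take; your analytic approach could be rescued only by adding an exponential-moment assumption on $\xi$, in which case you would be proving a strictly weaker theorem than the one cited.
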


The essence of the lower bound on $b(T_{n})$ is the following: Theorem~\ref{distance_theorem} suggests that a typical ball of radius $j$ in $T_{n}$ contains $O(j^{2})$ vertices; thus, covering $T_{n}$ with $j$ balls of radius at most $j$ requires $j$ to satisfy $j^{3} = \Omega(n)$---that is, $j = \Omega(n^{1/3})$. We make this argument rigorous in the proof of Proposition~\ref{burning_lower_bound}.

\begin{proposition}\label{burning_lower_bound}
 Let $k = k(n) = (\epsilon n)^{1/3}$, where $\epsilon = \epsilon(n) \to 0$ as $n \to \infty$. With probability $1-O(\epsilon)$, $\hat{b}(T_n) \ge k$, that is, there is no partition of the vertices of $T_{n}$ into $k$ disjoint sets $U_{1},U_{2},\dots,U_{k}$ such that \normalfont{$\text{diam}(U_{j}) \leq 2k$} for all $1 \leq j \leq k$.
\end{proposition}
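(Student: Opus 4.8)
The plan is to prove the contrapositive by a first moment (Markov) estimate on the number of pairs of vertices at small distance. Suppose that $T_n$ \emph{does} admit a partition $V(T_n) = U_1 \cup \cdots \cup U_k$ with $\mathrm{diam}(U_j) \le 2k$ for every $j$. (If $k \le 1$ there is nothing to prove, since $\hat b(T_n) \ge 1$ always; so we may assume $k \ge 1$, i.e.\ $\epsilon \ge 1/n$, and we treat the minor rounding of $(\epsilon n)^{1/3}$ as absorbed into the $O(\cdot)$ below.) Any two distinct vertices lying in a common block $U_j$ are at distance between $1$ and $2k$, so
$$
\sum_{i=1}^{2k} P_i(T_n) \ \ge\ \sum_{j=1}^{k} \binom{|U_j|}{2}.
$$
Since $\sum_{j=1}^{k} |U_j| = n$ and $x \mapsto \binom{x}{2}$ is convex, Jensen's inequality gives $\sum_{j=1}^{k} \binom{|U_j|}{2} \ge k \binom{n/k}{2} = \frac{n^{2}}{2k} - \frac{n}{2} \ge \frac{n^{2}}{4k}$, the last step valid once $n \ge 2k$, which holds for all large $n$ as $k = (\epsilon n)^{1/3} = o(n)$. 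Hence the existence of such a partition forces $\sum_{i=1}^{2k} P_i(T_n) \ge \frac{n^{2}}{4k}$.

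On the other hand, Theorem~\ref{distance_theorem} bounds the expectation of the same sum:
$$
\E\!\left[\sum_{i=1}^{2k} P_i(T_n)\right] \ \le\ cn \sum_{i=1}^{2k} i \ =\ cn\,(2k^{2}+k) \ \le\ 3cn k^{2}.
$$
By Markov's inequality, and using $k = (\epsilon n)^{1/3}$ (so $k^{3} = \epsilon n$),
$$
\prob\!\left( \sum_{i=1}^{2k} P_i(T_n) \ \ge\ \frac{n^{2}}{4k} \right) \ \le\ \frac{4k \cdot 3cn k^{2}}{n^{2}} \ =\ \frac{12c\,k^{3}}{n} \ =\ 12c\,\epsilon \ =\ O(\epsilon).
$$
Therefore, with probability $1-O(\epsilon)$ there is no partition of $V(T_n)$ into $k$ blocks of diameter at most $2k$. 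Finally, whenever no such partition exists we must have $\hat b(T_n) \ge k$: a covering of $V(T_n)$ by $k-1$ balls of radius $k-1$ would, by assigning each vertex to one ball containing it, produce a partition into at most $k-1 \le k$ blocks, each of diameter at most $2(k-1) \le 2k$. This establishes the proposition.

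The proof is short because essentially all of the work sits inside the cited distance estimate (Theorem~\ref{distance_theorem}); given it, only three routine points need attention: (i) translating the covering definition of $\hat b$ into the partition-into-bounded-diameter-blocks statement; (ii) the convexity step, which upgrades the crude pigeonhole bound $\max_j |U_j| \ge n/k$ to the aggregate bound $\sum_j \binom{|U_j|}{2} \gtrsim n^{2}/k$ — this quadratic gain is exactly what makes the failure probability come out as $k^{3}/n = \epsilon$ rather than something weaker; and (iii) checking that the lower-order terms (the $-n/2$ in the binomial bound, the $+k$ in $\sum_{i=1}^{2k} i = 2k^{2}+k$, and the constraint $\epsilon \ge 1/n$) are harmless. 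I do not expect any genuine obstacle.
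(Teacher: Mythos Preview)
Your proposal is correct and follows essentially the same approach as the paper: both bound $\sum_{i=1}^{2k} P_i(T_n)$ below by $\sum_j \binom{|U_j|}{2} \ge n^2/(2k) - n/2$ via convexity, bound its expectation above by $O(nk^2)$ via Theorem~\ref{distance_theorem}, and apply Markov's inequality to get the $O(k^3/n)=O(\epsilon)$ failure probability. The only cosmetic differences are that you simplify $n^2/(2k)-n/2 \ge n^2/(4k)$ rather than carrying a $(1-o(1))$ factor, and you make the link from $\hat b(T_n) < k$ back to the partition statement explicit at the end.
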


Since $b(T_n) \ge \hat{b}(T_n)$ (Observation~\ref{obs:b_hat}), Proposition~\ref{burning_lower_bound} implies the corresponding lower bound in Theorem~\ref{thm:main_theorem}.

\begin{proof}
Let $Q_{j}(T_{n}) = \sum_{i=1}^{j}P_{i}(T_{n})$, that is, $Q_{j}(T_{n})$ counts pairs of vertices in $T_{n}$ which are at most distance $j$ apart. From Theorem \ref{distance_theorem}, for all $n,j \in \N$, 
$$
\E[Q_{j}(T_{n})] = \sum_{i=1}^{j}\E[P_{i}(T_{n})]  \leq cn \sum_{i=1}^{j}i \leq cnj^{2}.
$$

For a contradiction, suppose there is a partition $U_{1},U_{2},\dots,U_{k}$ of the vertices of $T_{n}$ as described in the statement of the proposition. Since every pair of vertices in a given $U_{j}$ is at most distance $2k$ apart, we must have 
$$
Q_{2k}(T_{n}) \geq \sum_{j=1}^{k}\binom{|U_{j}|}{2} = \sum_{j=1}^{k}\left( \frac{|U_{j}|^{2}}{2} - \frac{|U_{j}|}{2} \right) = \frac{1}{2}\sum_{j=1}^{k}|U_{j}|^{2} - \frac{n}{2}.
$$
By Jensen's inequality, we get
$$
\frac{1}{k}\sum_{j=1}^{k}|U_{j}|^{2} \geq \left(\frac{1}{k}\sum_{j=1}^{k}|U_{j}|\right)^{2} = \left( \frac{n}{k} \right)^{2}
$$ 
and therefore
$$
Q_{2k}(T_{n}) \geq \frac{n^{2}}{2k} - \frac{n}{2} = \frac{n^{2}}{2k}\left(1 - O(k/n)\right) = \frac{n^{2}}{2k}\left(1 - o(n^{-2/3})\right).
$$

On the other hand, $\E[Q_{2k}(T_{n})] \leq 4cnk^{2}$. Thus, by Markov's inequality, 
\begin{eqnarray*}
\prob \left(Q_{2k}(T_{n}) \geq \frac{n^{2}}{2k} - \frac{n}{2}\right) &\leq& \frac{\E[Q_{2k}(T_{n})]}{\frac{n^{2}}{2k}} \left(1 + o(n^{-2/3})\right) \\
&=& O\left(\frac{k^{3}}{n}\right)
= O(\epsilon).
\end{eqnarray*}
It follows that a partition of $V(T_{n})$ with the stated properties exists with probability $O(\epsilon)$, which finishes the proof of the proposition.
\end{proof}

\section{Upper bound}\label{sec:upper_bound}

Here we prove the upper bound of Theorem \ref{thm:main_theorem}. To do this, we first devise a deterministic strategy which covers any rooted tree $\tau$ with balls of radius $2k$. The balls are centered at the vertices of a particular subset $C \subset V(\tau)$; in the Galton---Watson setting, we are able to show that $|C|$ is less than or equal to $2k$ a.a.s.\ as long as $k$ is at least of the order $n^{1/3}$, which yields the desired upper bound.

For any rooted tree $\tau$ with root $r$, for $i \in \N_{0}$ we write $\ell_{i}(\tau):=\{v \in V(\tau)\,:\,d_{\tau}(r,v) = i\}$ for the set of vertices at depth $i$. Let $h(\tau) \in \N_{0} \cup \{\infty\}$ be the height of $\tau$, that is, $h(\tau) := \sup\{i\,:\,\ell_{i}(\tau) \neq \emptyset \}$. For any $v \in V(\tau)$, let $\tau_{v}$ the (full) sub-tree of $\tau$ rooted at $v$. For $k \in \N$, and $j \in \{0,1,\dots,k-1\}$, let
$$
\mathcal{C}_{k}^{j}(\tau) := \bigcup_{i=0}^{\infty} \Big\{v \in \ell_{ik +  j}(\tau)\,:\,h(\tau_{v}) \geq k \Big\}.
$$
So $\mathcal{C}_{k}^{j}(\tau)$ consists of all vertices whose depth is $j$ modulo $k$ with subtrees of height at least $k$.

We first show that placing balls of radius $2k$ at the root and at each vertex in $\mathcal{C}_{k}^{j}(\tau)$ covers the vertices of $\tau$.

\begin{lemma}\label{covering_lemma}
    Let $\tau$ be a tree rooted at $r$. For any $k \in \N$ and $j \in \{ 0,1,\dots,k-1 \}$ we have
    $$
    V(\tau) = \bigcup_{v \in \mathcal{C}_{k}^{j}(\tau) \cup \{r\}}B_{2k}(v).
    $$
\end{lemma}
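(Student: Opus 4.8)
The plan is to verify the covering directly, vertex by vertex. Fix an arbitrary $u \in V(\tau)$ and write $d := d_{\tau}(r,u)$ for its depth; the goal is to produce a center $v \in \mathcal{C}_{k}^{j}(\tau) \cup \{r\}$ with $d_{\tau}(u,v) \le 2k$. If $d \le 2k$, then $u \in B_{2k}(r)$ and there is nothing more to do, so the only case requiring work is $d > 2k$.

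When $d > 2k$, I would walk along the unique $r$–$u$ path and select an ancestor of $u$ at a carefully chosen depth. The interval $[d-2k,\,d-k]$ consists of $k+1$ consecutive integers, all of them positive (since $d > 2k$ forces $d - 2k \ge 1$), so it contains at least one integer $p$ with $p \equiv j \pmod{k}$. Let $w$ be the unique ancestor of $u$ at depth $p$, so that $w \in \ell_{p}(\tau)$ with $p \equiv j \pmod{k}$. Two consequences follow from $d - 2k \le p \le d - k$: first, $d_{\tau}(w,u) = d - p \le 2k$, giving $u \in B_{2k}(w)$; second, $u$ is a descendant of $w$ with $d_{\tau}(w,u) = d - p \ge k$, so the subtree $\tau_{w}$ has height $h(\tau_{w}) \ge k$ and hence $w \in \mathcal{C}_{k}^{j}(\tau)$. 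Thus $u \in \bigcup_{v \in \mathcal{C}_{k}^{j}(\tau)} B_{2k}(v)$, and since the reverse inclusion $\bigcup_{v} B_{2k}(v) \subseteq V(\tau)$ is trivial, the claimed equality holds.

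I do not expect a genuine obstacle here; the argument is a short case analysis. The only points demanding mild care are the two arithmetic facts used to choose $w$: that the window $[d-2k,\,d-k]$ lies entirely within the range of valid depths $\{0,1,\dots,d\}$, and that it is wide enough (spanning more than one full residue cycle modulo $k$) to be guaranteed to contain a depth congruent to $j$. Both are immediate once one has committed to the case split at $d = 2k$, which is precisely why balls of radius $2k$ — rather than $k$ — are what the lemma needs.
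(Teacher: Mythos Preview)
Your proof is correct and follows essentially the same approach as the paper's: both arguments cover a vertex $u$ of depth $d$ by locating an ancestor at a depth congruent to $j$ modulo $k$ lying between $d-2k$ and $d-k$, which guarantees both that the ancestor is within distance $2k$ of $u$ and that its subtree has height at least $k$. The only cosmetic difference is that the paper names this depth explicitly as $(i-2)k+j$ (where $i$ is the least integer with $d < ik+j$), whereas you invoke a pigeonhole/residue argument to assert the existence of such a depth in the window $[d-2k,d-k]$; the underlying idea is identical.
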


\begin{proof}
    Fix $k \in \N$ and $j \in \{ 0,1,\dots,k-1 \}$. Let $v \in V(\tau)$ and let $i$ be the smallest non-negative integer such that $d(r,v) < ik +j$ and let $a$ be the unique ancestor of $v$ in $\ell_{(i-2)k +j}(\tau)$, or $a = r$ if $i \in \{0,1\}$. Then $a$ is either the root $r$, or $d(r,a)\equiv j\pmod k$ and $h(\tau_{a}) \geq k$. In either case, $a \in \mathcal{C}_{k}^{j}(\tau) \cup \{r\}$. Since $d(a,v) \leq 2k$, we have $v \in B_{2k}(a).$ This finishes the proof of the lemma. 
\end{proof}

Lemma \ref{covering_lemma} provides a scheme to cover a general rooted tree $\tau$ with balls of radius $2k$. Observe that for any $j$, $|\mathcal{C}_{k}^{j}(\tau)| \leq 2k-1$ implies that $\hat{b}(\tau) \leq 2k$, and hence $b(\tau)\leq 4k$ by Observation~\ref{obs:b_hat}. In particular, we conclude the following: 
\begin{equation}
\text{if } \min_{j}|\mathcal{C}_{k}^{j}(\tau)| \leq 2k-1, \text{ then }b(\tau) \leq 4k. \label{eq:upper_bound}
\end{equation}

\bigskip

Our next lemma estimates the probability that a vertex selected uniformly at random from the random tree $T_{n}$ has height at least $k$.

\begin{lemma}\label{random_vertex}
Consider the random tree $\tau = T_n$ on $n$ vertices. Then there exists a constant $c > 0$, dependent on the distribution of $\xi$, such that the following property holds. Let $u$ be a vertex selected uniformly at random from $V(\tau)$, and let its subtree be denoted by $\tau_u$. Then, for all $k \ge 0$,
$$
\prob \Big( h(\tau_u) \ge k \Big) \le  c\left(\frac{1}{k} + \frac{1}{\sqrt{n}} \right).
$$
\end{lemma}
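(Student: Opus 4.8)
The plan is to relate $h(\tau_u)$ for a uniformly random vertex $u$ to the distance profile of $T_n$ that was already controlled in Theorem~\ref{distance_theorem}, and then feed in the known tail bound for the height of a conditioned Galton-Watson tree. Observe first that if $h(\tau_u) \ge k$, then $\tau_u$ contains a path of length $k$ starting at $u$, so in particular there is a vertex $w$ with $d(u,w) = k$ that is a descendant of $u$. Summing over $u$, the number of vertices $u$ with $h(\tau_u) \ge k$ is at most the number of ordered pairs $(u,w)$ with $d(u,w)=k$, which is at most $2 P_k(T_n)$. Hence $\prob(h(\tau_u) \ge k) = \E[\,|\{u : h(\tau_u)\ge k\}|\,]/n \le 2\E[P_k(T_n)]/n \le 2ck$ by Theorem~\ref{distance_theorem}. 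This is, however, only useful for small $k$ — it gives nothing once $k \gtrsim 1$ — so it cannot be the whole story; the bound we want is $c(1/k + 1/\sqrt n)$, which actually \emph{decreases} in $k$.

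So the real content must come from a genuine tail estimate on subtree heights. The standard fact (see e.g. Flajolet-Odlyzko, or the survey of Janson on simply generated trees) is that for a conditioned Galton-Watson tree satisfying~(\ref{eq:distribution}), the height $h(T_n)$ satisfies $\prob(h(T_n) \ge k) \le C e^{-c k^2/n}$ for constants depending only on $\xi$; more relevantly, one has the uniform estimate that for a size-biased random vertex (equivalently, a uniformly random vertex) the expected number of ancestors at distance exactly $k$ is $O(1)$ for $k \le \sqrt n$ and decays beyond that. The cleanest route is: condition on $|\tau_u| = m$; given its size, $\tau_u$ is itself distributed (approximately) as a Galton-Watson tree conditioned to have $m$ vertices, for which $\prob(h \ge k \mid |\tau_u| = m) \le C\min(1, m/k^2)$ by the classical height tail (Kolmogorov-type estimate for critical branching processes: the probability that the process survives $k$ generations is $\Theta(1/k)$, and conditioning on total progeny $m$ rescales this). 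Then
$$
\prob(h(\tau_u) \ge k) \le \sum_{m} \prob(|\tau_u| = m)\, C\min\!\left(1, \frac{m}{k^2}\right) \le C\left( \prob(|\tau_u| \ge k^2) + \frac{\E[|\tau_u|\,\mathbf{1}_{|\tau_u| < k^2}]}{k^2}\right).
$$
The distribution of the size of the subtree at a uniformly random vertex in $T_n$ is well understood — its tail behaves like that of the total progeny of an unconditioned critical GW tree, truncated at $n$, so $\prob(|\tau_u| \ge t) = O(1/\sqrt t)$ for $t \le n$ and the truncated first moment $\E[|\tau_u|\,\mathbf 1_{|\tau_u|<k^2}]$ is $O(\min(k, \sqrt n))$. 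Substituting gives $\prob(h(\tau_u)\ge k) = O(1/k + 1/\sqrt n)$ after checking the two regimes $k \le \sqrt n$ and $k > \sqrt n$ separately.

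The main obstacle is making the "subtree of a random vertex behaves like a smaller conditioned GW tree" step precise with explicit, distribution-dependent constants rather than asymptotics — one has to be careful that conditioning on the subtree size is not exactly the same as an independent conditioned GW tree (there is the rest of the tree hanging above $u$, and $u$'s own degree is biased). A clean way around this is to avoid the subtree-size decomposition entirely and instead bound $\prob(h(\tau_u) \ge k)$ directly by a first-moment computation: $n\cdot\prob(h(\tau_u)\ge k) = \E[\#\{v : h(\tau_v) \ge k\}]$, and a vertex $v$ with $h(\tau_v)\ge k$ is an ancestor-at-distance-$\le h(T_n)$ of some leaf at depth $\ge \mathrm{depth}(v)+k$; combining Theorem~\ref{distance_theorem} (for the $1/k$ regime via a dyadic sum, exploiting that $h(\tau_v)\ge k$ forces a distance-$k$ descendant but we can also sum the contributions $P_k, P_{k+1},\dots$ weighted appropriately) with the subgaussian height tail $\prob(h(T_n)\ge t) \le Ce^{-ct^2/n}$ to kill the contribution of vertices with $h(\tau_v)$ large. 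I would present the argument via the subtree-size decomposition as the main line, citing Janson's survey for the local limit / subtree-size distribution and the classical height tail, and treat the two ranges of $k$ separately; the $1/\sqrt n$ term is exactly the contribution of the event $|\tau_u| \gtrsim n$ (i.e. $u$ near the root), which is genuinely $\Theta(1/\sqrt n)$ and cannot be improved.
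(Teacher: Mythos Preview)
Your subtree-size decomposition is viable, and the obstacle you flag is not actually one: the conditional law of $\tau_u$ given $|\tau_u|=m$ is \emph{exactly} that of $T_m$. This drops out of the same cyclic-lemma machinery the paper uses---a uniform vertex corresponds to a uniform cyclic shift of the preorder degree sequence, which (conditioned on $S_n=n-1$) is an i.i.d.\ sequence, so the first $m$ entries, conditioned to form a tree, are independent of the rest and distributed as the preorder sequence of $T_m$.

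The paper, however, bounds the same sum more cheaply. After the cyclic-shift identification it writes
\[
\prob(h(\tau_u)\ge k)=\sum_{s=k}^{n}\prob\big((\xi_1,\dots,\xi_s)\in\mathfrak{T}_s^k\big)\,\frac{\prob(S_{n-s}=n-s)}{\prob(S_n=n-1)},
\]
which is termwise identical to your $\sum_m \prob(|\tau_u|=m)\,\prob(h(T_m)\ge k)$. The difference is in the grouping: for $s\le n/2$ the local limit theorem bounds the ratio $\prob(S_{n-s}=n-s)/\prob(S_n=n-1)$ by a constant, and then the remaining sum $\sum_s\prob((\xi_1,\dots,\xi_s)\in\mathfrak{T}_s^k)$ telescopes to $\prob(h(\mathcal{T})\ge k)$ for the \emph{unconditioned} tree, which is $O(1/k)$ by Kolmogorov's elementary survival estimate. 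The range $s>n/2$ gives the $O(1/\sqrt n)$ term directly from the local limit theorem. Your factoring instead needs the subgaussian height tail $\prob(h(T_m)\ge k)\le Ce^{-ck^2/m}$ for \emph{conditioned} trees (Addario-Berry--Devroye--Janson), a much deeper input, together with tail bounds on $|\tau_u|$ that themselves require the local limit theorem. Both routes work; the paper's buys the result with only the unconditioned $O(1/k)$ bound.
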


To prove Lemma~\ref{random_vertex} we require some standard tools for conditioned Galton---Watson trees, which we introduce now. Given a tree $\tau$ with $s$ vertices, let $v_{1},v_{2},\dots,v_{s}$ be the vertices of $\tau$ in depth-first search ({\sc dfs}) order. We write $d_{i}$ for the number of children of $v_{i}$ and refer to $(d_{1},d_{2},\dots,d_{s})$ as the \textit{preorder degree sequence} of $\tau$. The preorder degree sequence gives rise to a representation of $\tau$ as a lattice path $(j, y_j)_{j=0}^s$ started from $(0,y_0)=(0,0)$ with $s$ steps, where the $j$th step is given by $y_j = y_{j-1} + (d_{j}-1)$. See Figure~\ref{fig:tree_and_lattice_path} for an example. 

\begin{figure}
\centering
\begin{subfigure}{.4\linewidth}
  \centering
  \includegraphics[width=.7\linewidth]{tree.png}
  \caption{}
  %\label{fig:sub1}
\end{subfigure}%
\begin{subfigure}{.6\linewidth}
  \centering
  \includegraphics[width=.7\linewidth]{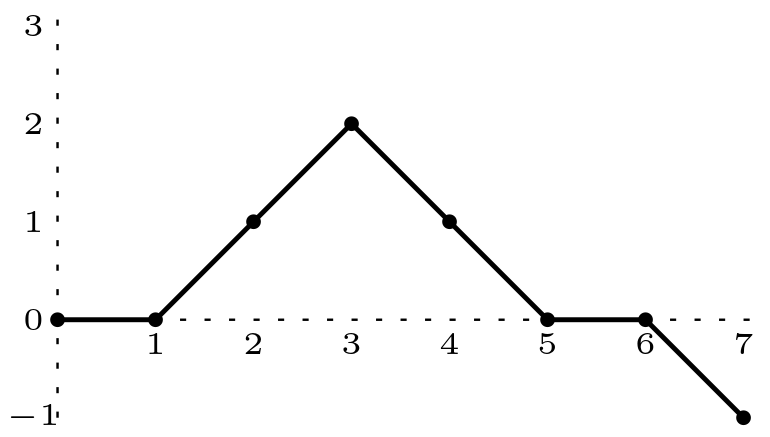}
  \caption{}
  %\label{fig:sub2}
\end{subfigure}
\caption{A rooted tree $\tau$ with preorder degree sequence $(1,2,2,0,0,1,0)$ (a) and its lattice path representation (b).}
\label{fig:tree_and_lattice_path}
\end{figure}

Note that the lattice path corresponding to a tree with $s$ vertices always ends at the point $(s,y_s)=(s, -1)$ and has a height strictly greater than $-1$ before then, evidenced by the fact that the height of the path at step $j$ is one less than the number of vertices in the ``queue" of the {\sc dfs} of the tree after $j$ vertices have been explored. Since vertex $v_{s}$ is always a leaf in the dfs order on $\tau$, the point $(s-1, y_{s-1})$ must equal $(0,0)$, meaning that the lattice path representation of $\tau$ can be identified with a \textit{\L{}ukasiewicz path} of length $s-1$. As the next lemma summarizes, there is a bijective correspondence between ordered trees with $s$ vertices and lattice paths of this type. 

\begin{lemma}[{{\cite[Lemma 15.2]{janson2012simply}}}]\label{degree_sequence}
    A sequence $(d_{1},d_{2},\dots,d_{s}) \in \N_{0}^{s}$ is the preorder degree sequence of a tree if and only if
        $$
            \sum_{i=1}^{j}d_{i} \geq j \quad \forall\,j \in \{1,2,\dots,s-1\}
        $$
    and
        $$
            \sum_{i=1}^{s}d_{j} = s-1.
        $$
\end{lemma}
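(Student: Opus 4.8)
The plan is to make precise the correspondence, hinted at in the text, between a candidate sequence $(d_1,\dots,d_s)$ and the execution of a depth-first search, and to read off both implications from it. Write $y_j = \sum_{i=1}^j (d_i - 1)$ for the height of the associated lattice path (with $y_0 = 0$). Then the first hypothesis, $\sum_{i=1}^j d_i \ge j$ for $j \in \{1,\dots,s-1\}$, is equivalent to $y_j \ge 0$ for $j = 1,\dots,s-1$, and the second hypothesis, $\sum_{i=1}^s d_i = s-1$, is equivalent to $y_s = -1$. The quantity I will track is the size of the \textsc{dfs} stack: if we process vertices one at a time, each time popping a vertex and then pushing its children, then after $j$ vertices have been processed the stack has size $1 + \sum_{i=1}^j (d_i - 1) = y_j + 1$.

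For the forward implication, suppose $(d_1,\dots,d_s)$ is the preorder degree sequence of a (plane) tree $\tau$ on $s$ vertices. Running \textsc{dfs} on $\tau$, the $j$-th vertex popped is $v_j$ and it pushes its $d_j$ children, so the stack-size identity above applies along the run. Since a vertex is in fact available to be popped at each of the $s$ steps, the stack is nonempty immediately before each pop, i.e.\ $y_{j-1} + 1 \ge 1$ for $j = 1,\dots,s$, which gives $y_j \ge 0$ for $j = 0,1,\dots,s-1$ and in particular the first condition. The second condition is automatic, since $\tau$ has $s$ vertices and hence $s-1$ edges, so $\sum_{i=1}^s d_i = s-1$ (equivalently, the stack is empty after the final pop, $y_s + 1 = 0$).

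For the reverse implication, suppose $(d_1,\dots,d_s)$ satisfies both conditions. I will build a plane tree by a greedy process that mimics \textsc{dfs}: declare $v_1$ the root, equipped with $d_1$ ordered ``open child slots''; at step $i = 2,\dots,s$, attach $v_i$ as the next unfilled child of the vertex owning the most recently created open slot, remove that slot, and then equip $v_i$ with $d_i$ fresh open slots placed on top. The number of open slots just before step $i$ equals $y_{i-1} + 1$, which is at least $1$ by the first condition, so the process never stalls and places all $s$ vertices; after step $s$ there remain $y_s + 1 = 0$ open slots by the second condition, so the resulting tree $\tau$ has exactly $s$ vertices and no dangling slots. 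By construction each $v_i$ has $d_i$ children, so it only remains to verify that $v_1,\dots,v_s$ is the preorder listing of $\tau$; this follows from the invariant that, throughout the construction, the stack of vertices with open slots is precisely the sequence of ancestors of the current vertex that still lack some child, ordered by depth — exactly the stack maintained by a \textsc{dfs} of the finished tree. Hence $(d_1,\dots,d_s)$ is the preorder degree sequence of $\tau$, and since this construction is inverse to the map $\tau \mapsto (\text{preorder degree sequence})$, the correspondence is bijective.

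The only step that needs genuine care is the last one: checking that greedily filling the most-recently-created slot reproduces the preorder of the tree being built. I expect to do this by induction on $i$, maintaining the invariant that after $v_i$ is placed, the stack lists from bottom to top exactly those ancestors of $v_i$ (including $v_i$ itself) that still have an unfilled child slot, in order of increasing depth; comparing with the stack behaviour of an actual \textsc{dfs} on $\tau$ then yields the claim. Everything else is the routine dictionary between partial-sum inequalities for $(d_i)$ and nonnegativity of the path $(y_j)$ recorded in the first paragraph.
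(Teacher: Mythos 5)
Your argument is correct. Note that the paper does not prove this lemma at all --- it is quoted verbatim from Janson's survey (Lemma 15.2 there) --- so there is nothing internal to compare against; your proof is the standard one, translating the partial-sum conditions into nonnegativity of the lattice path $y_j=\sum_{i\le j}(d_i-1)$, reading $y_j+1$ as the size of the \textsc{dfs} stack for the forward direction, and running the stack in reverse (greedily filling the most recently created child slot) to reconstruct the tree for the converse. This matches exactly the lattice-path picture the paper sets up around Figure~2, and the one step you leave as an induction (that the slot-filling order is the preorder of the constructed tree) is routine with the invariant you state, so no gap.
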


We also have the following useful property. 

\begin{lemma}[{{\cite[Corollary 15.4]{janson2012simply}}}]\label{cyclic_shift}
    If $(d_{1},d_{2},\dots,d_{s}) \in \N_{0}^{s}$ satisfies $\sum_{i=1}^{s}d_{i} = s-1$, then precisely one of the cyclic permutations of $(d_{1},d_{2},\dots,d_{s})$ is the preorder degree sequence of a tree.
\end{lemma}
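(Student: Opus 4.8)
The plan is to prove this by the classical cycle-lemma argument, phrased in the lattice-path language introduced just before Lemma~\ref{degree_sequence}, using Lemma~\ref{degree_sequence} itself as the criterion for being a preorder degree sequence.

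First I would set $a_i := d_i - 1$, so that $a_1,\dots,a_s$ are integers with $a_i \ge -1$ and $\sum_{i=1}^s a_i = (s-1) - s = -1$. Extend the sequence periodically by $a_{i+s} := a_i$ and define the walk $W_0 := 0$, $W_j := a_1 + \dots + a_j$ for $j \ge 0$; then $W_{j+s} = W_j - 1$ for all $j$. The cyclic permutation of $(d_1,\dots,d_s)$ starting from index $m+1$ is $(d_{m+1},\dots,d_{m+s})$ (indices read periodically), and its partial sums are $\sum_{\ell=1}^{j} d_{m+\ell} = j + (W_{m+j} - W_m)$. Every cyclic permutation has the same total $\sum_i d_i = s-1$, so the second condition of Lemma~\ref{degree_sequence} holds for all of them automatically; hence, by Lemma~\ref{degree_sequence}, the shift starting from index $m+1$ is the preorder degree sequence of a tree if and only if $j + (W_{m+j}-W_m) \ge j$ for every $j \in \{1,\dots,s-1\}$, i.e.\ if and only if
$$
W_m \le W_{m+j} \qquad \text{for all } j \in \{1,\dots,s-1\}.
$$
Thus the task reduces to showing that exactly one $m \in \{0,1,\dots,s-1\}$ satisfies $W_m \le W_{m+1},\dots,W_{m+s-1}$.

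For existence, I would take $m$ to be the \emph{smallest} index in $\{0,1,\dots,s-1\}$ at which $W_m = \min_{0 \le j \le s-1} W_j$. For $j \in \{m+1,\dots,s-1\}$ the inequality $W_m \le W_j$ is immediate from minimality. For $j \in \{s,\dots,m+s-1\}$, write $j = j' + s$ with $j' \in \{0,\dots,m-1\}$; by the choice of $m$ as the first minimizer, $W_m < W_{j'}$, and since the $W$'s are integers this gives $W_m \le W_{j'} - 1 = W_{j'+s} = W_j$. So this $m$ works. For uniqueness, suppose $0 \le m < m' \le s-1$ both work. Validity at $m$ with $j = m'-m \in \{1,\dots,s-1\}$ gives $W_{m'} \ge W_m$. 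Validity at $m'$ with the index $m+s$ — which lies in $\{m'+1,\dots,m'+s-1\}$ precisely because $0 \le m \le m'-1$ — gives $W_{m'} \le W_{m+s} = W_m - 1 < W_m$, a contradiction. Hence exactly one cyclic permutation is a preorder degree sequence.

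There is no genuinely difficult step; the only thing to handle carefully is the bookkeeping with the periodic extension, and in particular the choice of the \emph{first} minimizer of $W_0,\dots,W_{s-1}$ rather than an arbitrary one, since the strict inequality $W_m < W_{j'}$ is exactly what the existence argument needs after reducing an index modulo $s$. It is also worth recording explicitly at the outset that cyclic shifts preserve $\sum_i d_i = s-1$, so that applying Lemma~\ref{degree_sequence} leaves only the partial-sum condition to verify.
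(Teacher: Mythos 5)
Your proof is correct: the reduction via Lemma~\ref{degree_sequence} to the condition $W_m \le W_{m+j}$, the choice of the first minimizer of the walk, and the uniqueness argument via $W_{m+s} = W_m - 1$ are all sound. The paper itself gives no proof of this lemma (it simply cites \cite[Corollary 15.4]{janson2012simply}), and your argument is exactly the standard cycle-lemma proof used in that reference, so there is nothing to flag.
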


Let $(\xi_{1}^{(s)}, \xi_{2}^{(s)}, \dots, \xi_{s}^{(s)})$ be the preorder degree sequence of a Galton---Watson tree $T_{s}$ with offspring distribution $\xi$ conditioned to have $s$ vertices, and let $(\wt{\xi}_{1}^{(s)}, \wt{\xi}_{2}^{(s)}, \dots, \wt{\xi}_{s}^{(s)})$ be a uniformly random cyclic permutation of $(\xi_{1}^{(s)}, \xi_{2}^{(s)}, \dots, \xi_{s}^{(s)})$. Let $\xi_{1},\xi_{2},\dots$ be a sequence of i.i.d.\ copies of $\xi$, and define, for any $j \geq 1$, $S_{j} = \sum_{i=1}^{j}\xi_{i}$. Lemmas~\ref{degree_sequence} and~\ref{cyclic_shift} yield the following corollary:

\begin{corollary}\label{uniform_cyclic_shift}
    The sequence $(\wt{\xi}_{1}^{(s)}, \wt{\xi}_{2}^{(s)}, \dots, \wt{\xi}_{s}^{(s)})$ has the same distribution as the sequence $(\xi_{1},\xi_{2},\dots,\xi_{s})$ conditioned on $S_{s} = s-1$.
\end{corollary}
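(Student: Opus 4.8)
The plan is to prove the two distributions equal by writing both of them out explicitly on the finite set
$\mathcal{S}_{s} := \{(d_{1},\dots,d_{s}) \in \N_{0}^{s} : \sum_{i=1}^{s} d_{i} = s-1\}$, which supports both of the sequences in the statement (for $(\xi_{1},\dots,\xi_{s})$ conditioned on $S_{s}=s-1$ this is immediate, and for $(\wt{\xi}_{1}^{(s)},\dots,\wt{\xi}_{s}^{(s)})$ it follows from Lemma~\ref{degree_sequence}, since cyclic shifts preserve the coordinate sum). Write $p_{d} := \P(\xi = d)$. The two elementary inputs are: (i) for an ordered tree $\tau$ on $s$ vertices with preorder degree sequence $(d_{1},\dots,d_{s})$, the Galton--Watson weight is $\P(\mathcal{T}=\tau) = \prod_{v} p_{(\text{\#children of }v)} = \prod_{i=1}^{s} p_{d_{i}}$, so that the law of the preorder degree sequence of $T_{s}$ assigns mass $\prod_{i=1}^{s}p_{d_{i}}/\P(|\mathcal{T}|=s)$ to each \emph{valid} preorder degree sequence (those of Lemma~\ref{degree_sequence}) and mass $0$ elsewhere; and (ii) the weight $\prod_{i=1}^{s} p_{d_{i}}$ is invariant under cyclic rotation of the tuple, while $(\xi_{1},\dots,\xi_{s})$ conditioned on $S_{s}=s-1$ assigns mass $\prod_{i=1}^{s} p_{e_{i}}/\P(S_{s}=s-1)$ to each $(e_{1},\dots,e_{s})\in\mathcal{S}_{s}$.

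Next I would analyze the orbits of $\mathcal{S}_{s}$ under the cyclic group $\Z/s\Z$. If a tuple in $\N_{0}^{s}$ had period $p \mid s$ with $p < s$, its coordinate sum would be $(s/p)$ times its per-period sum, a multiple of $s/p \geq 2$, hence not equal to $s-1$; so every orbit of $\mathcal{S}_{s}$ consists of exactly $s$ distinct tuples, and by Lemma~\ref{cyclic_shift} each orbit contains exactly one valid preorder degree sequence. Fix $(e_{1},\dots,e_{s})\in\mathcal{S}_{s}$ and let $(d_{1},\dots,d_{s})$ be the unique valid preorder degree sequence in its orbit; since the orbit has $s$ elements there is exactly one shift $u\in\{0,1,\dots,s-1\}$ carrying $(d_{1},\dots,d_{s})$ to $(e_{1},\dots,e_{s})$. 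Using that the random cyclic shift is uniform and independent of $(\xi_{1}^{(s)},\dots,\xi_{s}^{(s)})$, together with (i) and the rotation invariance from (ii),
\begin{align*}
\P\big((\wt{\xi}_{1}^{(s)},\dots,\wt{\xi}_{s}^{(s)}) = (e_{1},\dots,e_{s})\big)
&= \frac{1}{s}\,\P\big((\xi_{1}^{(s)},\dots,\xi_{s}^{(s)}) = (d_{1},\dots,d_{s})\big) \\
&= \frac{1}{s}\cdot\frac{\prod_{i=1}^{s} p_{d_{i}}}{\P(|\mathcal{T}|=s)}
= \frac{1}{s}\cdot\frac{\prod_{i=1}^{s} p_{e_{i}}}{\P(|\mathcal{T}|=s)}.
\end{align*}

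Finally I would note that the last expression is a constant (independent of $(e_{1},\dots,e_{s})$) times $\prod_{i=1}^{s} p_{e_{i}}$, which is exactly the shape of the conditional mass $\prod_{i=1}^{s} p_{e_{i}}/\P(S_{s}=s-1)$; two probability measures on $\mathcal{S}_{s}$ with proportional point masses coincide, giving the claim and, as a byproduct, the identity $\P(S_{s}=s-1) = s\,\P(|\mathcal{T}|=s)$ (a form of the Otter--Dwass formula). I do not anticipate a real obstacle: the only points needing care are the bijective bookkeeping linking the three descriptions (conditioned tree $\leftrightarrow$ valid preorder degree sequence $\leftrightarrow$ arbitrary tuple with sum $s-1$) and the integrality observation that forbids short periods, ensuring every cyclic orbit has full size $s$ so that the uniform shift hits each orbit element with probability exactly $1/s$.
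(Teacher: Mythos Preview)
Your argument is correct and is precisely the standard derivation the paper has in mind: the paper does not spell out a proof but simply records the corollary as an immediate consequence of Lemmas~\ref{degree_sequence} and~\ref{cyclic_shift}, and you have filled in exactly those details (the product-form Galton--Watson weight, the cycle lemma, and rotation invariance of $\prod_i p_{d_i}$). Your extra observation that no tuple in $\mathcal{S}_{s}$ can have a nontrivial period, so that every cyclic orbit has full size $s$, is a genuine point of care that makes the $1/s$ factor rigorous and also underpins the ``precisely one'' in Lemma~\ref{cyclic_shift}; this is implicit in the paper's citation but good to make explicit.
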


Define the \textit{span} of $\xi$ as
$$
h = \gcd\{i \geq 1\,:\,\prob(\xi = i) > 0\}.
$$
We will use the following local limit theorems (see \cite[Lemma~4.1 and~(4.3)]{janson2016asymptotic} and the sources referenced therein).

\begin{lemma}\label{llt}
    Suppose $\xi$ satisfies (\ref{eq:distribution}) and has span $h$. Then, as $s \to \infty$, uniformly for $m \equiv 0\pmod h$, 
    $$
        \prob(S_{s} = m) = \frac{h}{\sqrt{2\pi\sigma^{2}s}}\left(e^{-(m-s)^{2}/2s\sigma^{2}} + o(1) \right).
    $$
    If $\mathcal{T}$ is the Galton---Watson tree with offspring distribution $\xi$, then for $s \equiv 1\pmod h$, as $s \to \infty$,
    $$
        \prob(|\mathcal{T}| = s) = \frac{h}{\sqrt{2\pi\sigma^{2}s^{3}}}(1+o(1)).
    $$
\end{lemma}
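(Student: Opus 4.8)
The plan is to prove the two statements in turn. The first is a classical local central limit theorem for a sum of i.i.d.\ lattice random variables with finite variance, which I would establish by Fourier inversion. The second then follows from the first together with the cycle lemma (the Dwass, or Otter--Dwass, identity), which is essentially already packaged in Lemma~\ref{cyclic_shift} of the excerpt.

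For the first statement, note first that since $\prob(\xi = 0) > 0$, the span condition forces every value in the support of $\xi$ to be a multiple of $h$; hence $S_{s} \in h\Z$ and $\prob(S_{s} = m) = 0$ unless $m \equiv 0 \pmod h$, which is why the statement is restricted in this way. Let $\phi(t) := \E[e^{it\xi}]$; then $\phi$ is $2\pi/h$-periodic, and Fourier inversion on the circle $\R/(2\pi/h)\Z$ gives, for $m \equiv 0 \pmod h$,
$$
\prob(S_{s} = m) = \frac{h}{2\pi}\int_{-\pi/h}^{\pi/h} \phi(t)^{s}\, e^{-itm}\, dt.
$$
I would substitute $t = u/\sqrt{s}$ and split the integral at $|t| = \delta$ for a small fixed $\delta > 0$. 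On $0 < |t| \le \pi/h$ one has $|\phi(t)| < 1$ — this is exactly where the span being \emph{equal} to $h$ is used, since $|\phi(t)| = 1$ would force $t\cdot\mathrm{supp}(\xi) \subseteq 2\pi\Z$ and hence $t \in (2\pi/h)\Z$ — so by compactness $\sup_{\delta \le |t| \le \pi/h}|\phi(t)| \le 1 - c_{\delta} < 1$, and the corresponding piece of the integral is $O((1-c_{\delta})^{s})$, negligible next to $s^{-1/2}$. On $|t| \le \delta$ I would use the Taylor expansion $\phi(t) = \exp\!\big(it - \tfrac{\sigma^{2}}{2}t^{2} + o(t^{2})\big)$ (valid since $\E[\xi^{2}] < \infty$), so that $\phi(u/\sqrt s)^{s}\, e^{-ium/\sqrt s} \to e^{-\sigma^{2}u^{2}/2}\, e^{-iu(m-s)/\sqrt s}$ pointwise and in a dominated fashion (bounded by $e^{-\sigma^{2}u^{2}/4}$ after shrinking $\delta$); integrating the limit over $u \in \R$ produces $\sqrt{2\pi/\sigma^{2}}\, e^{-(m-s)^{2}/2s\sigma^{2}}$, which is the claimed main term. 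Uniformity in $m$ is handled by noting that when $|m-s|/\sqrt s$ is large both the Gaussian factor and $\prob(S_{s}=m)$ are $o(s^{-1/2})$, the latter from the same tail estimate applied crudely.

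For the second statement, recall that a conditioned Galton--Watson tree with $s$ vertices is determined by its preorder degree sequence, and an ordered tree with degree sequence $(d_{1},\dots,d_{s})$ arises with probability $\prod_{i=1}^{s}\prob(\xi = d_{i})$. Summing over all such trees and invoking Lemma~\ref{cyclic_shift} — exactly one cyclic shift of any $(d_{1},\dots,d_{s})$ with $\sum_{i}d_{i} = s-1$ is a valid preorder degree sequence, and since $\sum_{i}(d_{i}-1) = -1$ forbids nontrivial periodicity all $s$ cyclic shifts are distinct — while using that $\prod_{i}\prob(\xi = d_{i})$ is invariant under cyclic shift, yields the Dwass identity
$$
\prob(|\mathcal{T}| = s) = \frac{1}{s}\,\prob(S_{s} = s-1).
$$
When $s \equiv 1 \pmod h$ we have $s-1 \equiv 0 \pmod h$, so the first statement applies with $m = s-1$; since $e^{-(s-1-s)^{2}/2s\sigma^{2}} = e^{-1/2s\sigma^{2}} \to 1$, it gives $\prob(S_{s} = s-1) = \frac{h}{\sqrt{2\pi\sigma^{2}s}}(1+o(1))$, and dividing by $s$ gives $\prob(|\mathcal{T}| = s) = \frac{h}{\sqrt{2\pi\sigma^{2}s^{3}}}(1+o(1))$.

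The main obstacle is purely technical: making the Fourier-inversion argument fully rigorous, in particular justifying the uniform-in-$m$ error term $o(1)$ — the pointwise-convergence-plus-domination step on $|t|\le\delta$ together with the crude tail bound covering the regime $|m-s| \gg \sqrt s$ — and checking that the strict inequality $|\phi(t)| < 1$ off the lattice is invoked correctly against the span. None of the ideas here are new; indeed both statements are standard and are available in the references already cited in the excerpt, so the write-up is really about assembling these classical estimates cleanly rather than overcoming a genuine difficulty.
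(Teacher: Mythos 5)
Your proof is correct, but note that the paper does not actually prove this lemma: it imports it verbatim from Janson's survey (\cite[Lemma~4.1 and~(4.3)]{janson2016asymptotic}), whose underlying sources establish it by exactly the route you take --- Gnedenko's local limit theorem via characteristic-function inversion for the first display, and the Otter--Dwass identity $\prob(|\mathcal{T}|=s)=\frac{1}{s}\prob(S_{s}=s-1)$ via the cycle lemma for the second. Your treatment of the key points (strict inequality $|\phi(t)|<1$ away from $(2\pi/h)\Z$, the fact that $\sum_i (d_i-1)=-1$ rules out periodic degree sequences so that each cyclic orbit has size exactly $s$, and uniformity in $m$ coming from the $m$-independence of the integral error bounds) is sound, so this is a faithful reconstruction of the standard proof rather than a new argument.
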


We are now ready to prove Lemma~\ref{random_vertex}.

\begin{proof}[Proof of Lemma~\ref{random_vertex}] 
Throughout the proof, we will use $c_{1}, c_{2}, \dots$ for non-explicit positive constants which do not depend on $n$ (but may depend on the distribution of $\xi$). Implicitly, we will assume throughout that $n \equiv 1\pmod h$, where $h$ is the span of $\xi$, so that $\prob(S_{n} = n-1) > 0.$

We identify the random vertex $u$ with a random index of the {\sc dfs} order on $T_{n}$. Consider $(\xi_{u}^{(n)}, \xi_{u+1}^{(n)}, \dots,\xi_{n}^{(n)}, \xi_{1}^{(n)}, \dots, \xi_{u-1}^{(n)}).$ It is clear that this sequence has the same distribution as $(\wt{\xi}_{1}^{(n)}, \wt{\xi}_{2}^{(n)}, \dots, \wt{\xi}_{n}^{(n)})$, which, in turn, has the same distribution as $(\xi_{1},\xi_{2},\dots,\xi_{n})$ conditioned on $S_{n} = n-1$ by Corollary~\ref{uniform_cyclic_shift}.

For $k \in \N$ and $s \geq k$, let $\mathfrak{T}_{s}^{k}$ be the set of ordered trees with $s$ vertices and height at least $k$. For a sequence $(d_{1},d_{2},\dots,d_{s}) \in \N_{0}^{s}$, we write $(d_{1},d_{2},\dots,d_{s}) \in \mathfrak{T}_{s}^{k}$ if $(d_{1},d_{2},\dots,d_{s})$ is the preorder degree sequence of a tree in $\mathfrak{T}_{s}^{k}$. Note that for any $(d_{1},d_{2},\dots,d_{s}) \in \mathfrak{T}_{s}^{k}$, we have $\sum_{i=1}^{s}d_{i} = s-1$.
Then,
\begin{eqnarray}
    \prob(h(\tau_{u}) \geq k) &=& \sum_{s = k}^{n}\prob((\wt{\xi}_{1}^{(n)}, \wt{\xi}_{2}^{(n)},\dots,\wt{\xi}_{s}^{(n)}) \in \mathfrak{T}_{s}^{k})\nonumber\\
    &=&\sum_{s=k}^{n}\prob((\xi_{1}, \xi_{2},\dots,\xi_{s}) \in \mathfrak{T}_{s}^{k}\,|\,S_{n} = n-1)\nonumber\\
    &=&\sum_{s=k}^{n}\frac{\prob((\xi_{1}, \xi_{2},\dots,\xi_{s}) \in \mathfrak{T}_{s}^{k},\,S_{n} = n-1)}{\prob(S_{n} = n-1)}\nonumber\\
    &=&\sum_{s=k}^{n}\frac{\prob((\xi_{1}, \xi_{2},\dots,\xi_{s}) \in \mathfrak{T}_{s}^{k}) \cdot \prob\left(\sum_{i=s+1}^{n}\xi_{i} = n-s \right)}{\prob(S_{n} = n-1)}\nonumber\\
    &=&\sum_{s=k}^{n}\frac{\prob((\xi_{1}, \xi_{2},\dots,\xi_{s}) \in \mathfrak{T}_{s}^{k}) \cdot \prob(S_{n-s} = n-s )}{\prob(S_{n} = n-1)}.\label{eq:sum1}
\end{eqnarray}

By Lemma~\ref{llt}, there is a constant $c_{1} > 0$ such that for $s \in \{1, 2, \dots, \lfloor n/2 \rfloor\}$, we have
$$
\frac{\prob(S_{n-s} = n-s)}{\prob(S_{n} = n-1)} \leq c_{1}.
$$
Thus,
\begin{eqnarray*}
\sum_{s=k}^{\lfloor n/2 \rfloor}\frac{\prob((\xi_{1}, \xi_{2},\dots,\xi_{s}) \in \mathfrak{T}_{s}^{k}) \cdot \prob(S_{n-s} = n-s )}{\prob(S_{n} = n-1)} &\leq& c_{1}\sum_{s=k}^{\lfloor n/2 \rfloor}\prob((\xi_{1}, \xi_{2},\dots,\xi_{s}) \in \mathfrak{T}_{s}^{k})\\
&\leq& c_{1}\sum_{s=k}^{\infty}\prob((\xi_{1}, \xi_{2},\dots,\xi_{s}) \in \mathfrak{T}_{s}^{k})\\
&=& c_{1} \, \prob(h(\mathcal{T}) \geq k),
\end{eqnarray*}
where, recall, $\mathcal{T}$ is the unconditioned Galton---Watson tree with offspring distribution $\xi$. By Kolmogorov's Theorem \cite[Theorem 12.7]{lyons2017probability}, there is a constant $c_{2} > 0$ so that for any $k \geq 1$, we have $\prob(h(\mathcal{T}) \geq k) \leq c_{2} / k$, and thus $c_{1} \, \prob(h(\mathcal{T}) \geq k) \leq c_{3} / k$ for some constant $c_{3} > 0$. This bounds the partial sum of~(\ref{eq:sum1}) where $k \leq s \leq n/2$. 

For the other partial sum, we first observe that
$$
\prob((\xi_{1}, \xi_{2},\dots,\xi_{s}) \in \mathfrak{T}_{s}^{k}) \leq \prob(|\mathcal{T}| = s).
$$
By Lemma~\ref{llt}, there is a constant $c_{4} > 0$ so that
$$
\frac{\prob(|\mathcal{T}| = s)}{\prob(S_{n} = n-1)} \leq \frac{c_{4}}{n}
$$
for all $n \geq 1$ and $\lceil n/2 \rceil \le s \leq n$. Using Lemma~\ref{llt} again, we get that for all $j \geq 1$
$$
\prob(S_{j} = j) \leq \frac{c_{5}}{\sqrt{j}}
$$
for some constant $c_{5} > 0$. It follows that
\begin{eqnarray*}
\sum_{s = \lceil n/2 \rceil}^{n}\prob(S_{n-s} = n-s) &=& \sum_{j=0}^{n-\lceil n/2 \rceil}\prob(S_{j} = j)\\
&\leq& 1 + c_{5}\sum_{j=1}^{n-\lceil n/2 \rceil}\frac{1}{\sqrt{j}}\\
&\leq&c_{6}\sqrt{n}
\end{eqnarray*}
for some constant $c_{6} > 0$, where the bound in the last line follows from a straightforward comparison of the sum with an integral. In all, we get that the partial sum of~(\ref{eq:sum1}) with $n/2 \le s \leq n$ is at most $c_{7} / \sqrt{n}$ (for some constant $c_7 > 0$) for all $n$.

Finally, combining the two bounds, we conclude that~(\ref{eq:sum1}) is upper bounded by $c_{8}\left(\frac{1}{k} + \frac{1}{\sqrt{n}} \right)$ for some constant $c_8>0$, as desired. This finishes the proof of the lemma.
\end{proof}

We now have all the ingredients to finalize the upper bound. The next theorem, as discussed earlier (see~(\ref{eq:upper_bound})), implies that $b(\tau) \leq 4k = O( (n/\epsilon)^{1/3} )$ with probability $1 - O(\epsilon)$.

\begin{theorem}\label{thm:upper_bound}
    Let $\epsilon = \epsilon(n) \to 0$ as $n \to \infty$, and let $k = \lfloor \left(\frac{n}{\epsilon} \right)^{1/3} \rfloor$. Then, with probability $1 - O(\epsilon)$, we have
    $$
    \min_{j \in \{0, 1, \dots,k-1\} } |\mathcal{C}_{k}^{j}(T_{n})| \leq 2k-1.
    $$
\end{theorem}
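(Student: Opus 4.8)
The plan is to exploit the fact that, for each fixed $k$, the $k$ sets $\mathcal{C}_{k}^{0}(T_{n}),\mathcal{C}_{k}^{1}(T_{n}),\dots,\mathcal{C}_{k}^{k-1}(T_{n})$ are pairwise disjoint and their union is exactly the set of vertices $v$ with $h((T_{n})_{v}) \ge k$: such a vertex lies in $\mathcal{C}_{k}^{j}(T_{n})$ for the unique $j \in \{0,\dots,k-1\}$ with $j \equiv d(r,v) \pmod{k}$. Write $X = X(T_{n}) := |\{v \in V(T_{n}) : h((T_{n})_{v}) \ge k\}|$, so that $\sum_{j=0}^{k-1}|\mathcal{C}_{k}^{j}(T_{n})| = X$. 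If $\min_{j}|\mathcal{C}_{k}^{j}(T_{n})| \ge 2k$ then every one of the $k$ summands is at least $2k$, forcing $X \ge 2k^{2}$; hence
$$
\prob\Big(\min_{j}|\mathcal{C}_{k}^{j}(T_{n})| \ge 2k\Big) \le \prob\big(X \ge 2k^{2}\big),
$$
and since $\min_{j}|\mathcal{C}_{k}^{j}(T_{n})|$ is integer-valued, it suffices to show $\prob(X \ge 2k^{2}) = O(\epsilon)$.

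The second step is a first-moment estimate. Letting $u$ be a uniformly random vertex of $T_{n}$, so that $(T_{n})_{u} = \tau_{u}$ in the notation of Lemma~\ref{random_vertex}, one has $\E[X] = n\cdot\prob(h(\tau_{u}) \ge k)$, and Lemma~\ref{random_vertex} gives $\E[X] \le c\big(n/k + \sqrt{n}\big)$ for a constant $c>0$ depending only on the law of $\xi$.

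The final step is a case analysis on the size of $k$. If $2k^{2} > n$ then $X \le n < 2k^{2}$ deterministically (as $X$ counts a subset of the $n$ vertices), so $\prob(X \ge 2k^{2}) = 0$ and there is nothing to prove. Otherwise $2k^{2} \le n$; using $k \ge (n/\epsilon)^{1/3}-1$ together with $\epsilon \to 0$, one checks that this regime forces $\epsilon \ge n^{-1/2}$ for all large $n$, and moreover $k^{3} = (n/\epsilon)(1-o(1))$ and $k^{2} = (n/\epsilon)^{2/3}(1-o(1))$. Markov's inequality then gives
$$
\prob(X \ge 2k^{2}) \le \frac{\E[X]}{2k^{2}} \le \frac{c}{2}\left(\frac{n}{k^{3}} + \frac{\sqrt{n}}{k^{2}}\right).
$$
Here $n/k^{3} = (1+o(1))\,\epsilon = O(\epsilon)$, while $\sqrt{n}/k^{2} = (1+o(1))\,n^{-1/6}\epsilon^{2/3} \le (1+o(1))\,\epsilon^{1/3}\epsilon^{2/3} = O(\epsilon)$, the inequality using $n^{-1/6} \le \epsilon^{1/3}$, which holds because $\epsilon \ge n^{-1/2}$ in this regime. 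Combining the cases yields $\prob(X\ge 2k^{2}) = O(\epsilon)$, as required.

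The only real subtlety — the main obstacle, such as it is — is the interplay between $\epsilon$ and $n$: the $1/\sqrt{n}$ term in Lemma~\ref{random_vertex} is only negligible after dividing by $k^{2}$ when $\epsilon$ is not extremely small, specifically when $\epsilon \gtrsim n^{-1/2}$. What makes this a non-issue is the observation that in the complementary regime $k$ is so large relative to $n$ that all the sets $\mathcal{C}_{k}^{j}(T_{n})$ together contain at most $n < 2k^{2}$ vertices, so the bound holds for trivial cardinality reasons; one should also keep the (automatic) off-by-one between "$\ge 2k$" and "$\le 2k-1$" in mind. Beyond that, everything is a routine application of linearity of expectation and Markov's inequality.
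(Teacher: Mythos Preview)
Your proof is correct and follows essentially the same route as the paper's: both observe that $\sum_{j}|\mathcal{C}_{k}^{j}(T_{n})|$ equals the number $Y$ of vertices with subtree height at least $k$, bound $\E[Y]$ via Lemma~\ref{random_vertex}, and finish with Markov's inequality (the paper phrases the pigeonhole step as $\min_{j}X_{j}\le Y/k$ rather than your contrapositive ``$\min_{j}\ge 2k \Rightarrow Y\ge 2k^{2}$'', but these are equivalent). Your case split on whether $2k^{2}>n$ is in fact slightly more careful than the paper, which simply writes $\frac{c}{2k-1}\big(\tfrac{n}{k^{2}}+\tfrac{\sqrt{n}}{k}\big)=O(n/k^{3})$ without isolating the regime where the $\sqrt{n}$ contribution could dominate.
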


\begin{proof}
Let $X_{j} = |\mathcal{C}_{k}^{j}(T_{n})|$ for $j \in \{0,1,\dots,k-1\}$, and let $Y$ be the number of vertices $v$ in $\tau = T_{n}$ such that $h(\tau_{v}) \geq k$. Clearly, we have the identity $Y = \sum_{j=0}^{k-1}X_{i}.$

Now, observe that
$$
\min_{j \in \{0, 1, \dots, k-1\} } X_{j} \leq \frac{1}{k} \sum_{j=0}^{k-1}X_{j} = \frac{Y}{k}.
$$
By Lemma~\ref{random_vertex}, $\E[Y] \leq c\left(\frac{n}{k} + \sqrt{n} \right)$. Therefore, by Markov's inequality,
$$
\prob\left(\frac{Y}{k} > 2k-1 \right) \leq  \frac {\E[Y]}{(2k-1)k} \le \frac{c}{2k-1}\left(\frac{n}{k^{2}} + \frac{\sqrt{n}}{k} \right) = O\left(\frac{n}{k^{3}} \right) = O(\epsilon).
$$
This completes the proof of the theorem.
\end{proof}

\section{Future Directions}\label{sec:future}

In this paper we showed that asymptotically almost surely (a.a.s.) $b(T_n)$ is close to $n^{1/3}$, that is, a.a.s.\ $\epsilon n^{1/3} \le b(T_n) \le n^{1/3} / \epsilon$, provided that $\epsilon=\epsilon(n) \to 0$ as $n \to \infty$. A more detailed analysis gives that $c_{1}\left(\frac{n}{\sigma^{2}}\right)^{1/3} \leq \E[b(T_{n})] \leq c_{2}\left(\frac{n}{\sigma^{2}}\right)^{1/3}$, where $c_{1}$ and $c_{2}$ are universal constants, i.e., not dependent on the offspring distribution $\xi$. One could ask under what conditions there exists a universal constant $c$ such that $\E[b(T_{n})] = (c+o(1))\left(\frac{n}{\sigma^{2}}\right)^{1/3}$.  A limit theory for $b(T_{n})/n^{1/3}$ would also be of interest.

Achieving this improvement with our techniques would require significant refinements of Theorem \ref{distance_theorem} and Lemma \ref{random_vertex}. For instance, Theorem \ref{distance_theorem} implies that the expected number of pairs of vertices within distance $k$ of one another in $T_{n}$ is $O(nk^{2}),$ with the implied constant dependent on $\xi$. If instead we had that a.a.s.\ the number of such pairs is $O(nk^{2})$, the proof of Proposition \ref{burning_lower_bound} would (with minimal modification) give that $b(T_{n}) \geq \Omega(n^{1/3})$. Similarly, if we knew that a.a.s.\ the number of full subtrees of height at least $k$ was at most $O(n/k)$, then the proof of Theorem \ref{thm:upper_bound} would imply that a.a.s.\ $b(T_{n}) = O(n^{1/3})$. At present, Lemma~\ref{random_vertex} only bounds the expected number of such subtrees. Refinements of this type would be interesting results in their own right.

\section{Acknowledgement}

Part of this work was done during the 18th Annual Workshop on Probability and Combinatorics, McGill University's Bellairs Institute, Holetown, Barbados (March 22--29, 2024).

% \newpage

% \bibliographystyle{plain}
\bibliography{burning_trees_refs} 

\begin{thebibliography}{10}

\bibitem{aldous1991continuum}
David Aldous.
\newblock The continuum random tree. ii. an overview.
\newblock {\em Stochastic Analysis}, 167:23--70, 1991.

\bibitem{alon1992transmitting}
Noga Alon.
\newblock Transmitting in the $n$-dimensional cube.
\newblock {\em Discrete Applied Mathematics}, 37:9--11, 1992.

\bibitem{bastide2021improved}
Paul Bastide, Marthe Bonamy, Anthony Bonato, Pierre Charbit, Shahin Kamali,
  Th{\'e}o Pierron, and Mika{\"e}l Rabie.
\newblock Improved pyrotechnics: Closer to the burning graph conjecture.
\newblock {\em arXiv preprint arXiv:2110.10530}, 2021.

\bibitem{bessy2018bounds}
St{\'e}phane Bessy, Anthony Bonato, Jeannette Janssen, Dieter Rautenbach, and
  Elham Roshanbin.
\newblock Bounds on the burning number.
\newblock {\em Discrete Applied Mathematics}, 235:16--22, 2018.

\bibitem{bonato2014burning}
Anthony Bonato, Jeannette Janssen, and Elham Roshanbin.
\newblock Burning a graph as a model of social contagion.
\newblock In {\em Algorithms and Models for the Web Graph: 11th International
  Workshop, WAW 2014, Beijing, China, December 17-18, 2014, Proceedings 11},
  pages 13--22. Springer, 2014.

\bibitem{bonato2016burn}
Anthony Bonato, Jeannette Janssen, and Elham Roshanbin.
\newblock How to burn a graph.
\newblock {\em Internet Mathematics}, 12(1-2):85--100, 2016.

\bibitem{devroye1998branching}
Luc Devroye.
\newblock Branching processes and their applications in the analysis of tree
  structures and tree algorithms.
\newblock In {\em Probabilistic Methods for Algorithmic Discrete Mathematics},
  pages 249--314. Springer, 1998.

\bibitem{devroye2011distances}
Luc Devroye and Svante Janson.
\newblock Distances between pairs of vertices and vertical profile in
  conditioned {G}alton--{W}atson trees.
\newblock {\em Random Structures \& Algorithms}, 38(4):381--395, 2011.

\bibitem{janson2012simply}
Svante Janson.
\newblock Simply generated trees, conditioned {G}alton--{W}atson trees, random
  allocations and condensation.
\newblock {\em Probability Surveys}, 9:103, 2012.

\bibitem{janson2016asymptotic}
Svante Janson.
\newblock Asymptotic normality of fringe subtrees and additive functionals in
  conditioned {G}alton--{W}atson trees.
\newblock {\em Random Structures \& Algorithms}, 48(1):57--101, 2016.

\bibitem{land2016upper}
Max~R Land and Linyuan Lu.
\newblock An upper bound on the burning number of graphs.
\newblock In {\em Algorithms and Models for the Web Graph: 13th International
  Workshop, WAW 2016, Montreal, QC, Canada, December 14--15, 2016, Proceedings
  13}, pages 1--8. Springer, 2016.

\bibitem{lyons2017probability}
Russell Lyons and Yuval Peres.
\newblock {\em Probability on Trees and Networks}, volume~42.
\newblock Cambridge University Press, 2017.

\bibitem{meir1978altitude}
Amram Meir and John~W Moon.
\newblock On the altitude of nodes in random trees.
\newblock {\em Canadian Journal of Mathematics}, 30(5):997--1015, 1978.

\bibitem{mitsche2017burning}
Dieter Mitsche, Pawe{\l} Pra{\l}at, and Elham Roshanbin.
\newblock Burning graphs: a probabilistic perspective.
\newblock {\em Graphs and Combinatorics}, 33:449--471, 2017.

\bibitem{mitsche2018burning}
Dieter Mitsche, Pawe{\l} Pra{\l}at, and Elham Roshanbin.
\newblock Burning number of graph products.
\newblock {\em Theoretical Computer Science}, 746:124--135, 2018.

\bibitem{norin2024burning}
Sergey Norin and J{\'e}r{\'e}mie Turcotte.
\newblock The burning number conjecture holds asymptotically.
\newblock {\em Journal of Combinatorial Theory, Series B}, 168:208--235, 2024.

\end{thebibliography}

\end{document}